\newtheorem{theorem}{Theorem}[section]
\newtheorem{corollary}[theorem]{Corollary}
\newtheorem{definition}[theorem]{Definition}
\renewcommand{\bar}{\overline}
\newcommand{\pa}{\partial}
\renewcommand{\phi}{\varphi}
\newcommand{\ka}{K\"ahler }
\newcommand{\C}{{\mathbb C}}
\newcommand{\ke}{K\"ahler-Einstein }
\newcommand{\ga}{\alpha}
\newcommand{\gb}{\beta}
\newcommand{\CC}{\mathbb{C}}
\newcommand{\barj}{{\overline j}}
\newcommand{\barz}{{\overline z}}
\newcommand{\barpartial}{{\overline \partial}}
\newcommand{\ZZ}{\mathbb{Z}}
\newcommand{\mapright}[1]{\smash{\mathop{   \hbox to 0.7cm{\rightarrowfill}}
  \limits^{#1}}}
\newcommand{\Ric}{\operatorname{Ric}}
\newcommand{\grad}{\mathrm{grad}}
\newcommand{\Fut}{\mathrm{Fut}}
\newcommand{\Vol}{\mathrm{Vol}}
\title{Residue formula for an obstruction to Coupled K\"ahler-Einstein metrics}
\author{Akito Futaki and Yingying Zhang}
\address{Yau Mathematical Sciences Center, Tsinghua University, Haidian district, Beijing 100084, China}
\email{futaki@tsinghua.edu.cn}
\address{Yau Mathematical Sciences Center, Tsinghua University, Haidian district, Beijing 100084, China}
\email{yingyzhang@tsinghua.edu.cn}
\date{October 13th, 2019}
\begin{document}

\begin{abstract}
We obtain a residue formula for an obstruction to the existence of coupled K\"ahler-Einstein metrics
described in \cite{futakizhang}. We apply it to an example studied by the first author \cite{futaki83.1} and Hultgren \cite{Hultgren17} 
which is a Fano manifold with reductive automorphism, does not admit a K\"ahler-Einstein metric but still admits coupled K\"ahler-Einstein
metrics. 
\end{abstract}

\maketitle

\footnotetext[1]{The work of the second author was partially supported by Tsinghua University Initiative Scientific Research Program.}

\section{Introduction}
A $k$-tuple of K\"ahler metrics $\omega_1,\ \cdots, \omega_k$ on a compact K\"ahler manifold $M$
is called coupled K\"ahler metrics 
 if it satisfies
\begin{equation}\label{coupled KE}
\Ric(\omega_1) = \ \cdots = \Ric(\omega_k) = \lambda \sum_{\ga=1}^k \omega_\ga
\end{equation}
for $\lambda = -1,\ 0$ or $1$ where $\Ric(\omega_\ga)$ is the Ricci form of $\omega_\ga$ (we do not distinguish K\"ahler metrics $g_\ga$ and their K\"ahler forms $\omega_\ga$).
Such metrics were introduced  by Hultgren and Witt Nystr\"om \cite{HultgrenWittNystrom18}. 
If $\lambda = 0$ this is just a $k$-tuple of Ricci-flat metrics and the existence is well-known for compact K\"ahler manifolds
with $c_1(M) = 0$ by the celebrated solution by Yau 
\cite{yau78} of the Calabi conjecture. For $\lambda = -1$ or $\lambda = 1$ the existence problem is an extension for the problem
for negative or positive K\"ahler-Einstein metrics, and an obvious condition is $c_1(M) < 0$ or $c_1(M) > 0$. 
Hultgren and Witt Nystr\"om \cite{HultgrenWittNystrom18} proved the existence of the solution for $\lambda = -1$ under the condition
$c_1(M) < 0$ extending \cite{yau78} and \cite{aubin76}, and there are many interesting results for $\lambda = 1$ under the condition $c_1(M) > 0$
including attempts to extend \cite{CDS3} and \cite{Tian12}. Further studies of coupled K\"ahler-Einstein
metrics have been done in \cite{pingali}, \cite{Hultgren17}, \cite{futakizhang}, \cite{DelcroixHultgren1812}, \cite{Takahashi1901}, \cite{DatarPingali1901},
\cite{Takahashi1904}, \cite{Nakamura19}.

In this paper we derive a residue formula for an obstruction to the existence of positive coupled K\"ahler-Einstein metrics described in
our previous paper \cite{futakizhang} 
and apply to a computation of an example which appeared in Hultgren \cite{Hultgren17}. 

The obstruction is described as follows. 
Let $M$ be a Fano manifold of complex dimension $m$. Assume the anticanonical line bundle has a 
splitting $K_M^{-1}=L_1\otimes\cdots\otimes L_k$ into the tensor product of ample line bundles $L_\ga\to M$.
Then we have  $c_1(L_\ga)=\frac{1}{2\pi}[\omega_\ga]$ for a K\"ahler form $\omega_\ga=\sqrt{-1}g_{\ga\, i\bar j} dz^i\wedge d\bar z^j$, and thus

\begin{equation*}
c_1(M)=\frac{1}{2\pi}\sum\limits_{\ga=1}^k[\omega_\ga].
\end{equation*}
For each $\omega_\ga$ we have $f_\ga\in C^\infty(M)$ such that 
\begin{equation*}
\Ric(\omega_\ga)=\sum\limits_{\gb=1}^k\omega_\gb+\sqrt{-1}\pa\bar\pa f_\ga,
\end{equation*}
where $f_\ga$ are normalized by 
\begin{equation}\label{normalization}
e^{f_1}\omega_1=\cdots=e^{f_k}\omega_k.
\end{equation}
Note that this normalization still leaves an ambiguity up to a constant. But we ignore this ambiguity since
it does not cause any problem in later arguments. Of course $\omega_1,\ \cdots,\ \omega_k$ are coupled K\"ahler-Einstein metrics
if and only if $f_\ga$ are all constant.

Let $X$ be a holomorphic vector field. Since a Fano manifold is simply connected there exist complex-valued smooth functions defined up to constant
$u_\ga$ such that 
\begin{equation}\label{Hamiltonian}
i_X\omega_\ga=\bar\pa(\sqrt{-1}u_\ga).
\end{equation}
By the abuse of terminology we call $u_\ga$ the Hamiltonian function of $X$ with respect to $\omega_\ga$ though
$u_\ga$ is a Hamiltonian function for the imaginary part of $X$ in the usual sense of symplectic geometry only when 
$u_\ga$ is real valued.
In Theorem 3.3 of \cite{futakizhang}, it is shown for some choices of $u_\ga$ we have
\begin{equation}\label{Th3.3}
\Delta_{\ga} u_\ga+(\grad_{\ga} u_\ga)f_\ga = -\sum\limits_{\gb=1}^k u_\gb.
\end{equation}
where $\Delta_{\ga}=-\bar\pa^*_{ \ga}\bar\pa$ is the Laplacian with respect to $\omega_\ga$
and $\grad_{\alpha}u_\ga$ 
is the type $(1,0)$-part of the gradient of $u_\ga$ expressed as 
$\grad_{\alpha}u_\ga = g_{\alpha}^{i\barj}\frac{\partial u_\ga}{\partial \barz^j} \frac{\partial}{\partial z^i}$  in terms of local holomorphic coordinates $( z^1, \dots, z^m)$. The case of $k=1$ of this result has been obtained in \cite{futaki87}.
If we replace $u_\ga$ by $u_\ga^{c_\ga}=u_\ga+c_\ga$ the equations \eqref{Th3.3} are satisfied for $u_\ga^{c_\ga}$ 
if and only if 
\begin{equation}\label{ambiguity} 
\sum\limits_{\ga=1}^k c_\ga=0.
\end{equation}
\begin{definition}[\cite{futakizhang}]
With the choice of $u_\ga$ satisfying \eqref{Th3.3}
the Lie algebra character is defined as
\begin{equation}\label{futaki}
\begin{split}
\Fut: \mathfrak h(M)&\to \C\\
X\quad&\mapsto \Fut(X)=\sum\limits_{\ga=1}^k\frac{\int_M u_\ga\ \omega_\ga^m}{\int_M \omega_\ga^m}.
\end{split}
\end{equation}
\end{definition}
\noindent
Notice that this definition of $\Fut$ is not affected by the ambiguity of the choice of $u_\ga$ because of
\eqref{ambiguity}. Note also $\Fut$ is the coupled infinitesimal form of the group character obtained in \cite{futaki86}.

To formulate the localization formula let $Z=\bigcup\limits_{\lambda\in\Lambda}Z_\lambda$ be zero sets of $X$ where $Z_\lambda$'s are connected components. Let $N_\ga(Z_\lambda)=(T_M|_{Z_\lambda})/{T_{Z_\lambda}}$ be the normal bundle of $Z_\lambda$ with respect to $\omega_\ga$. Then the Levi-Civita connection $\nabla^\ga$ of $\omega_\ga$ naturally induces an endomorphism $L^{N_\ga}(X)$ of $N_\ga(Z_\lambda)$ by 
\[L^{N_\ga}(X)(Y)=(\nabla^\ga_Y X)^{\perp}\in N_\ga(Z_\lambda), \quad \text{ for any } Y\in N_\ga(Z_\lambda).\]
We also assume $Z$ is nondegenerate in the sense that $L^{N_\ga}$ is nondegenerate.
Let $K_\ga$ be the curvature of $N_\ga(Z_\lambda)$.
The localization formula of $\Fut(X)$ we obtain is the following.
\begin{theorem}\label{localization}
Let $M$ be a Fano manifold with $K_M^{-1}=L_1\otimes\dots\otimes L_k$. Let $X$ be a holomorphic vector field with nondegenerate zero sets $Z=\bigcup_{\lambda\in\Lambda}Z_\lambda$, then
\begin{equation}\label{locfut}
\Fut(X)=\frac{1}{m+1}\sum\limits_{\ga=1}^k\Bigg(\frac{\sum\limits_{\lambda\in \Lambda}\int_{Z_\lambda}\big(\big(E_\ga+c_1(L_\ga)\big)|_{Z_\lambda}\big)^{m+1}\big /\det\big((2\pi)^{-1}(L^{N_\ga}(X)+\sqrt{-1}K_\ga)\big)}{\sum\limits_{\lambda\in \Lambda}\int_{Z_\lambda}\big(\big(E_\ga+c_1(L_\ga)\big)|_{Z_\lambda}\big)^{m}\big /\det\big((2\pi)^{-1}(L^{N_\ga}(X)+\sqrt{-1}K_\ga)\big)}\Bigg).
\end{equation}
where $E_\ga\in \Gamma(End(L_\ga))$ is given by $E_\ga s = u_\ga s$ with $L^{N_\ga}$ and $K_\ga$ being as above.
\end{theorem}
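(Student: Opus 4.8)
The plan is to read the right-hand side of \eqref{locfut} as the output of the Bott residue (equivariant localization) formula applied to the top two powers of a single equivariantly closed form attached to each factor $L_\ga$, and to match this against the definition \eqref{futaki} of $\Fut$ by an elementary degree count. For each $\ga$ the relevant form is the $X$-equivariant first Chern form
\[
\Phi_\ga \;=\; E_\ga + c_1(L_\ga) \;\longleftrightarrow\; u_\ga + \tfrac{1}{2\pi}\omega_\ga ,
\]
whose degree-$0$ term is the endomorphism $E_\ga = \nabla^\ga_X - L_X \in \Gamma(\mathrm{End}(L_\ga)) = C^\infty(M)$, identified by the Hamiltonian relation \eqref{Hamiltonian} with multiplication by $u_\ga$ (this is the assertion $E_\ga s = u_\ga s$ in the statement), and whose degree-$2$ term is the Chern form representing $c_1(L_\ga)=\frac{1}{2\pi}[\omega_\ga]$. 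In the same way $L^{N_\ga}(X)+\sqrt{-1}K_\ga$ is the $X$-equivariant curvature of the normal bundle $N_\ga(Z_\lambda)$, with zeroth-order part the linearization $L^{N_\ga}(X)$ and curvature part $\sqrt{-1}K_\ga$; its determinant $\det\big((2\pi)^{-1}(L^{N_\ga}(X)+\sqrt{-1}K_\ga)\big)$ is the equivariant Euler form, invertible by the nondegeneracy hypothesis since its zeroth-order part $\det\big((2\pi)^{-1}L^{N_\ga}(X)\big)$ is nonvanishing on $Z_\lambda$.

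Next I would reduce $\Fut(X)$ to a ratio of integrals of powers of $\Phi_\ga$. Since $\dim_\R M = 2m$, integration over $M$ extracts the form-degree-$2m$ component, and the wedge powers of $\frac{1}{2\pi}\omega_\ga$ of order exceeding $m$ vanish for dimensional reasons, so only one binomial term survives in each case:
\[
\int_M \Phi_\ga^{\,m} = \frac{1}{(2\pi)^m}\int_M \omega_\ga^m ,
\qquad
\int_M \Phi_\ga^{\,m+1} = \frac{m+1}{(2\pi)^m}\int_M u_\ga\,\omega_\ga^m .
\]
Dividing gives $\dfrac{\int_M u_\ga\,\omega_\ga^m}{\int_M \omega_\ga^m} = \dfrac{1}{m+1}\,\dfrac{\int_M \Phi_\ga^{\,m+1}}{\int_M \Phi_\ga^{\,m}}$, and summing over $\ga$ turns \eqref{futaki} into $\Fut(X)=\frac{1}{m+1}\sum_\ga \int_M\Phi_\ga^{m+1}/\int_M\Phi_\ga^{m}$, already producing the prefactor $\frac{1}{m+1}$ of \eqref{locfut}.

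Then I would apply the localization theorem to each equivariantly closed form $\Phi_\ga^{\,p}$ over the nondegenerate fixed-point set $Z=\bigcup_\lambda Z_\lambda$,
\[
\int_M \Phi_\ga^{\,p} \;=\; \sum_{\lambda\in\Lambda}\int_{Z_\lambda}\frac{\big(\Phi_\ga|_{Z_\lambda}\big)^{p}}{\det\big((2\pi)^{-1}(L^{N_\ga}(X)+\sqrt{-1}K_\ga)\big)} ,
\]
noting $\Phi_\ga|_{Z_\lambda}=(E_\ga+c_1(L_\ga))|_{Z_\lambda}$. Substituting $p=m+1$ in the numerator and $p=m$ in the denominator of the ratio from the previous step reproduces \eqref{locfut} verbatim. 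The case $k=1$ of this computation is the classical residue formula for the Futaki invariant (\cite{futaki87} and Futaki's book), and the coupled case is obtained by carrying out the same argument for each $\Phi_\ga$ and summing.

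The main obstacle is making the first step rigorous for a genuinely holomorphic, hence complex, vector field with complex-valued potential $u_\ga$: unlike the case of a real torus action with real moment map, the naive Cartan-model identity $du_\ga = \frac{1}{2\pi}\,i_X\omega_\ga$ fails componentwise, since for $X$ of type $(1,0)$ the contraction $i_X\omega_\ga=\sqrt{-1}\,\bar\partial u_\ga$ is purely of type $(0,1)$ while $du_\ga$ is not. I would resolve this by invoking the Bott residue formula for holomorphic vector fields, which is designed precisely for complex $X$ and yields the determinant $\det\big((2\pi)^{-1}(L^{N_\ga}(X)+\sqrt{-1}K_\ga)\big)$ at each $Z_\lambda$; equivalently one may pass to the real one-parameter group generated by the imaginary part of $X$, whose zero set and whose values of the integrals coincide with those of $X$, reducing to the real equivariant localization theorem. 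The remaining work is the routine but delicate bookkeeping of the factors of $2\pi$ and $\sqrt{-1}$ relating the normalization $c_1(L_\ga)=\frac{1}{2\pi}[\omega_\ga]$, the endomorphism $E_\ga$, and the Euler-form denominator, so that the constants in \eqref{locfut} come out exactly.
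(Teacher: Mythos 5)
Your proposal is correct and follows essentially the same route as the paper: both reduce $\Fut(X)$ to the ratio $\frac{1}{m+1}\int_M(E_\ga+\sqrt{-1}\Theta_\ga)^{m+1}/\int_M(E_\ga+\sqrt{-1}\Theta_\ga)^{m}$ by the dimension count on binomial terms and then apply Bott's residue formula for holomorphic vector fields (the paper carries out the Bott argument explicitly via the form $\eta_\ga=\frac{\pi_\ga}{1-\sqrt{-1}\bar\pa\pi_\ga}\wedge P_\ga$ and Stokes' theorem on $M\setminus B_\epsilon(Z)$, following Theorem 5.2.8 of \cite{futaki88}). Your primary resolution of the complex-vector-field issue --- invoking the holomorphic Bott formula rather than real equivariant localization --- is exactly the paper's choice; only your parenthetical alternative of passing to the flow of the imaginary part of $X$ would need more care, since that flow need not be isometric for a general holomorphic $X$.
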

\begin{corollary}
If $Z$ contains only discrete points, then
\begin{eqnarray*}
\Fut(X)&=&\frac{1}{m+1}\sum\limits_{\ga=1}^k\Bigg(\frac{\sum\limits_{p\in Z}(u_\ga(p))^{m+1}/\det(\nabla X)(p)}{\sum\limits_{p\in Z}(u_\ga(p))^{m}/\det(\nabla X)(p)}\Bigg)\\
&=&\frac{1}{m+1}\Bigg(\sum\limits_{\ga=1}^k\frac{\sum\limits_{p\in Z}(u_\ga(p))^{m+1}}{\sum\limits_{p\in Z}(u_\ga(p))^{m}}\Bigg).
\end{eqnarray*}
\end{corollary}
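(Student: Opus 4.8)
The plan is to recognize the right-hand side of \eqref{locfut} as the Atiyah--Bott--Berline--Vergne localization (equivalently, Bott's residue formula for holomorphic vector fields) of two equivariant integrals over $M$, one furnishing the numerator and one the denominator of each summand of $\Fut(X)$, and then to match these against the definition \eqref{futaki}.

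First I would assemble the equivariant Chern--Weil data. For each $\ga$ the pair consisting of the endomorphism $E_\ga$ (multiplication by $u_\ga$) in degree zero and the first Chern form $c_1(L_\ga)=\tfrac{1}{2\pi}\omega_\ga$ in degree two is the representative, in the Cartan (Bott) model, of the $X$-equivariant first Chern class of $L_\ga$, with $u_\ga$ the holomorphy potential supplied by \eqref{Hamiltonian}. The decisive point is that $E_\ga+c_1(L_\ga)$ is equivariantly closed: with the equivariant differential built from $X$, closedness is equivalent precisely to the Hamiltonian equation $i_X\omega_\ga=\bar\pa(\sqrt{-1}u_\ga)$ together with $d\omega_\ga=0$. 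Hence every power $(E_\ga+c_1(L_\ga))^{j}$ is equivariantly closed, and the localization theorem applies to it.

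Second I would reduce the two ratios in \eqref{futaki} to equivariant integrals. Expanding by degree and keeping only the top (real degree $2m$) component, which is all that survives integration over $M$, gives
\begin{equation*}
\int_M (E_\ga+c_1(L_\ga))^{m+1}=\frac{m+1}{(2\pi)^m}\int_M u_\ga\,\omega_\ga^m,\qquad \int_M (E_\ga+c_1(L_\ga))^{m}=\frac{1}{(2\pi)^m}\int_M \omega_\ga^m,
\end{equation*}
the first because the only degree-$2m$ term is $(m+1)\,u_\ga\,c_1(L_\ga)^m$ and the second because it is $c_1(L_\ga)^m$. Dividing, the factor $(2\pi)^{-m}$ cancels and each summand of \eqref{futaki} becomes
\begin{equation*}
\frac{\int_M u_\ga\,\omega_\ga^m}{\int_M \omega_\ga^m}=\frac{1}{m+1}\,\frac{\int_M (E_\ga+c_1(L_\ga))^{m+1}}{\int_M (E_\ga+c_1(L_\ga))^{m}},
\end{equation*}
which already produces the overall factor $\tfrac{1}{m+1}$ and the quotient shape of \eqref{locfut}. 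I would then localize numerator and denominator to $Z=\bigcup_\lambda Z_\lambda$: the normal data assemble into the equivariant curvature of $N_\ga(Z_\lambda)$, with degree-zero part the induced endomorphism $L^{N_\ga}(X)$ and degree-two part $\sqrt{-1}K_\ga$, and its equivariant Euler class is $\det\big((2\pi)^{-1}(L^{N_\ga}(X)+\sqrt{-1}K_\ga)\big)$, invertible exactly because $Z$ is nondegenerate. The localization theorem rewrites each $\int_M$ as the sum over $\lambda$ of $\int_{Z_\lambda}$ of the restriction divided by this Euler class; substituting and summing over $\ga$ yields \eqref{locfut}.

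The main obstacle is the first step: justifying the equivariant Chern--Weil/localization machinery for a genuinely holomorphic (hence possibly non-real) vector field $X$ with complex-valued potentials $u_\ga$, so that the holomorphic Bott residue formula may be used rather than the naive $S^1$ Cartan model, and identifying $L^{N_\ga}(X)+\sqrt{-1}K_\ga$ correctly as the restriction of the equivariant curvature to $N_\ga(Z_\lambda)$ (the tangential part of the connection dropping out along $Z_\lambda$). Once this is in place, the Corollary follows by specialization: for an isolated zero $Z_\lambda=\{p\}$ the normal bundle is all of $T_pM$, the curvature term $K_\ga$ vanishes at a point, the restriction $(E_\ga+c_1(L_\ga))|_p$ is the scalar $u_\ga(p)$, and $L^{N_\ga}(X)(p)$ is the linearization $\nabla X(p)$ of $X$ at $p$ (independent of $\ga$), producing the residue sums $\sum_p u_\ga(p)^{j}/\det(\nabla X)(p)$ stated there.
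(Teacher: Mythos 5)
Your derivation of the localization formula itself follows essentially the same route as the paper: the paper also rewrites each summand of $\Fut$ as $\frac{1}{m+1}\int_M(E_\ga+\sqrt{-1}\Theta_\ga)^{m+1}\big/\int_M(E_\ga+\sqrt{-1}\Theta_\ga)^{m}$, using that the only top-degree term of $(u_\ga+\omega_\ga)^{m+1}$ is $(m+1)u_\ga\,\omega_\ga^m$, and then invokes Bott's residue formula via the transgression form $\eta_\ga$ of \cite{futaki88}; your Cartan-model phrasing is the same computation, and the concern you raise about complex-valued potentials is precisely why the paper runs the holomorphic Bott argument rather than the naive $S^1$-equivariant one. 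Your specialization to isolated zeros --- $K_\ga=0$ at a point, $(E_\ga+c_1(L_\ga))|_p=u_\ga(p)$, $L^{N_\ga}(X)(p)=\nabla X(p)$ independent of $\ga$ since at a zero the covariant derivative is connection-independent, and cancellation of the $(2\pi)^{-m}$ --- correctly yields the first displayed equality of the Corollary.

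The gap is that you never address the second displayed equality, which asserts that the weights $1/\det(\nabla X)(p)$ may simply be dropped. This is part of the statement, and it does not follow from the first line by any algebraic identity: a ratio of weighted sums equals the ratio of the unweighted sums only when the weight is independent of $p$, whereas $\det(\nabla X)(p)$ genuinely varies over the zero set in general (already for $X=z\,\partial/\partial z$ on $\mathbb{CP}^1$ the linearizations at the two fixed points are $+1$ and $-1$). Writing $w_p=1/\det(\nabla X)(p)$, the two ratios agree iff $\sum_{p<q}u_\ga(p)^m u_\ga(q)^m\bigl(u_\ga(p)-u_\ga(q)\bigr)\bigl(w_p-w_q\bigr)=0$, which is not automatic. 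So as written your argument establishes only the first line; the second line needs either an additional hypothesis or a separate justification, and at minimum you should flag that it is not a consequence of the residue formula alone.
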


We can apply the obtained localization formula for the invariant $\Fut$ in the coupled situation to verify the example considered in Hultgren's paper \cite{Hultgren17}. This example was first considered by the first author in \cite{futaki83.1}, where he showed that the invariant $\Fut$ is non-vanishing, hence there does not exist a \ke metric on this example though the automorphism group is reductive and thus Matsushima's condition \cite{matsushima57} is satisfied. Later, in \cite{futakiMabuchiSakane90}, the localization formula in \cite{futakimorita85} was used to show a much simpler computation of the invariant $\Fut$ can be done. Hultgren \cite{Hultgren17} considered decompositions of the anticanonical line bundle, and proved in a special case of the decomposition there does exist coupled \ke metric on this manifold. 

The rest of the paper proceeds as follows. In section 2 we prove Theorem \ref{localization}. In section 3 we verify the existence
result of Hultgren in \cite{Hultgren17} by checking the vanishing of $\Fut$ as an application of Theorem \ref{localization}.

\section{Localization Formula}

We first consider an ample line bundle $L\to M$ with $c_1(L)=\frac{1}{2\pi}[\omega]$ where $[\omega]$ is a \ka class of $M$. Let $e_U$ be a non-vanishing local holomorphic section of $L|_U$ where $U$ is an open set of $M$. Then $e_U$ determines a local trivialization of the line bundle $L|_U\cong U\times\C$, given by $ze_U\mapsto (p, z)$, where $z$ is the fiber coordinate. Let $h$ be the Hermitian metric of $L$, and $h_U=h(e_U, e_U)$. The local connection form is given by $\theta_U=\pa\log h_U$. Let 
\begin{equation}
\theta=\theta_U+\frac{dz}{z}, 
\end{equation}
then $\theta$ is a globally defined connection form on the associated principle $C^*$-bundle. To see this, we first remark that $\frac{dz}{z}$ is the Maurer-Cartan form of $C^*$. If $U\cap V\neq\emptyset$, and we take another trivialization on $L|_V\cong V\times\C$, given by $we_V\mapsto (p, w)$, where $e_V$ is a non-vanishing local holomorphic section and $w$ is the fiber coordinate. Let $f$ be the non-vanishing holomorphic function such that $e_V=f e_U$, then $h_V=|f|^2 h_U$ and $z=fw$. Then,
\begin{align*}
\theta_V+\frac{dw}{w}=\pa\log|f|^2h_U+\frac{f}{z}d\Big(\frac{z}{f}\Big)=\frac{df}{f}+\pa\log h_U+\frac{dz}{z}-\frac{df}{f}=\theta_U+\frac{dz}{z}.
\end{align*}
Hence $\theta=\theta_U + \frac{dz}{z}$ is independent of the trivialization. Obviously 
$\sqrt{-1} \barpartial \theta = \omega$.
Let $u$ be a complex-valued smooth function such that 
\begin{equation}\label{Hamiltonian2}
i_X\omega = \bar\pa(\sqrt{-1}u).
\end{equation}
It is well-known (c.f. \cite{FM02} for example) that a Hamiltonian vector field $X$ written in this way 
lifts $L$ uniquely up to $cz \frac{\partial}{\partial z}$ for a constant $c$. 
Let $\tilde{X}$ be a lift of $X$ to $L$. 
Then obviously $u_X := -\theta (\tilde{X})$ is a Hamiltonian function for $X$ and $-\theta(\tilde{X} - cz \frac{\partial}{\partial z}) = u_X + c$. Thus, the ambiguity of $c_\ga$ for $L_\ga$ above appears in this way.
The connection form $\theta$ determines a horizontal lift $X^h$ of $X$, given by 
\[X^h=\tilde{X}-\theta(\tilde{X})z\frac{\pa}{\pa z}.\]
Apparently, this expression is independent of the lift $\tilde{X}$ and $\theta(X^h)=0$.

Now, for each ample line bundle $L_\ga\to M$, $\ga=1, \dots, k$, choose Hermitian metric $h_\ga$, let $\theta_\ga$ be corresponding connection form on the associated principal $C^*$-bundle, and $\Theta_\ga$ is the curvature form such that $\Theta_\ga=\bar\pa\pa\log h_\ga=-\sqrt{-1}\omega_\ga$.

Hence, with a choice of a Hamiltonian function $u_\ga$, the lifted holomorphic vector field $X_\ga$ (omitting the tilde) of $X$ on $L_\ga$ is 
\[X_\ga=X_\ga^h-u_\ga z\frac{\pa}{\pa z}\]
where $X_\ga^h$ is the horizontal lift of $X$. Then of course
\begin{equation*}
u_\ga=-\theta_\ga(X_\ga).
\end{equation*}
\noindent
The infinitesimal action on the space $\Gamma(L_\ga)$ of holomorphic sections of $L_\ga$ is given by 
\begin{align*}
\Lambda_\ga: \Gamma(L_\ga)&\to \Gamma (L_\ga)\\
 s\quad&\mapsto\Lambda_\ga(s)=\nabla_X^\ga s+ u_\ga s
\end{align*}
where $\nabla^\ga$ is the covariant derivative determined by $\theta_\ga$. 

Then we can check that for $f\in C^\infty (M)$, $s\in \Gamma(L_\ga)$, 
\begin{itemize}
\item [(1)] $\Lambda_\ga$ satisfies the Leibniz Rule. 
\begin{align*}
\Lambda_\ga(fs)&=\nabla_X^\ga(fs)+ u_\ga fs\\
&=X(f)s+f\nabla_X^\ga s+ fu_\ga s\\
&=X(f)s+f\Lambda_\ga s.
\end{align*}
\item [(2)] $\bar\pa\Lambda_\ga=\Lambda_\ga\bar\pa$. This follows from
\begin{align*}
\bar\pa\Lambda_\ga s&=\bar\pa(i_X\nabla^\ga s+ u_\ga s)=-i_X\bar\pa\nabla^\ga s+\bar\pa u_\ga s\\
&=\big(-i_X\Theta_\ga+\bar\pa u_\ga\big)s=\sqrt{-1}\big(i_X\omega_\ga-\bar\pa(\sqrt{-1}u_\ga)\big)s=0.
\end{align*}
\item[(3)] It is obvious that $\Lambda_\ga|_{\text{Zero}(X)}=u_\ga|_{\text{Zero}(X)}$ is a linear map on $\Gamma(L_\ga|_{\text{Zero}(X)})$.
\end{itemize}
\noindent
This implies $\Lambda_\ga|_{\text{Zero}(X)}\in End(L_\ga|_{\text{Zero}(X)})$. This endomorphism along the zero set of $X$ can be extended a global endomorphism of $L_\ga$ by letting for $s\in \Gamma(L_\ga)$
\[E_\ga s=\Lambda_\ga s-\nabla_X^\ga s=u_\ga s=-\theta_\ga(X_\ga)s.\]
Then $E_\ga\in End(L_\ga)$ and 
\begin{equation}
\bar\pa E_\ga=\bar\pa u_\ga=i_X(-\sqrt{-1}\omega_\ga)=i_X\Theta_\ga.
\end{equation}

The above discussion enables us to write the Lie algebra character (\ref{futaki}) as
\begin{equation}\label{futall}
\begin{split}
\Fut(X) &= \sum\limits_{\ga=1}^k\frac{\int_M u_\ga\ \omega_\ga^m}{\int_M \omega_\ga^m}\\
 &=\frac{1}{m+1}\sum\limits_{\ga=1}^k\frac{\int_M(u_\ga+\omega_\ga)^{m+1}}{\int_M(u_\ga+\omega_\ga)^m}\\
 &=\frac{1}{m+1}\sum\limits_{\ga=1}^k\frac{\int_M(-\theta_\ga(X_\ga)+\sqrt{-1}\Theta_\ga)^{m+1}}{\int_M(-\theta_\ga(X_\ga)+\sqrt{-1}\Theta_\ga)^m}\\
 &=\frac{1}{m+1}\sum\limits_{\ga=1}^k\frac{\int_M(E_\ga+\sqrt{-1}\Theta_\ga)^{m+1}}{\int_M(E_\ga+\sqrt{-1}\Theta_\ga)^m}.
\end{split}
\end{equation}
\noindent
Here we remark that the both expressions $\int_M(-\theta_\ga(X_\ga)+\sqrt{-1}\Theta_\ga)^{m+1}$ and $\int_M(-\theta_\ga(X_\ga)+\sqrt{-1}\Theta_\ga)^{m}$ are independent of the choice of Hermitian metric $h_\ga$. This could either follow from \cite{futakimorita85} (proposition 2.1) or argue as follows. We choose a family of Hermitian metrics ${h}_\ga(t)$, let ${h}_\ga(t)=e^{-t\varphi_\ga}h_\ga$, for $\varphi_\ga\in C^\infty(M)$. Then 
\[{\theta}_\ga(t)=\pa\log h_\ga(t)+\frac{dz}{z}=\theta_\ga-t\pa\varphi_\ga\] 
is the corresponding family of connections on associated principle $C^*$-bundle, and the curvature forms are
\[\Theta_\ga(t)=\Theta_\ga+t\pa\bar\pa\varphi_\ga,\]
and we compute that
\[i_X\Theta_\ga(t)=i_X\Theta_\ga+i_X(t\pa\bar\pa\varphi_\ga)=\bar\pa\big(u_\ga+tX(\varphi_\ga)\big),\]
we let $ u_\ga(t)=u_\ga+tX(\varphi_\ga)$. This $ u_\ga(t)$ is a Hamiltonian function of $X$ for the K\"ahler form
$\omega_\ga(t)$ corresponding to $h_\ga(t)$. 
As we saw above
the lifted vector field on $L_\ga$ is given by
\[ X_\ga(t)=X_\ga^h(t)- u_\ga(t)z\frac{\pa}{\pa z}.\]
Then 
\[-\theta_\ga(t)(X_\ga(t))= u_\ga(t)=-\theta_\ga(X_\ga)+tX(\varphi_\ga).\]
We will check the metric independence of $\int_M(-\theta_\ga(X_\ga)+\sqrt{-1}\Theta_\ga)^{m+1}$, and similar argument works for $\int_M(-\theta_\ga(X_\ga)+\sqrt{-1}\Theta_\ga)^{m}$. We compute that
\begin{align*}
&\frac{d}{dt}\int_M \big(-\theta_\ga(t)(X_\ga(t))+\sqrt{-1}\Theta_\ga(t)\big)^{m+1}\\
=&(m+1)\int_M \big(-\theta_\ga(t)(X_\ga(t))+\sqrt{-1}\Theta_\ga(t)\big)^m\wedge\big(X(\varphi_\ga)+\sqrt{-1}\pa\bar\pa\varphi_\ga\big)\\
=&(m+1)\Big(\int_M X(\varphi_\ga)(\sqrt{-1}\Theta_\ga(t))^{m}-m\theta_\ga(t)(X_\ga(t))(\sqrt{-1}\Theta_\ga(t))^{m-1}\wedge\sqrt{-1}\pa\bar\pa\varphi_\ga\Big)\\
=&(m+1)\Big(\int_M X(\varphi_\ga)(\sqrt{-1}\Theta_\ga(t))^m-m\int_M\bar\pa\big(\theta_\ga(t)(X_\ga(t))\big)(\sqrt{-1}\Theta_\ga(t))^{m-1}\wedge\sqrt{-1}\pa\varphi_\ga\Big)\\
=&(m+1)\Big(\int_M X(\varphi_\ga)(\sqrt{-1}\Theta_\ga(t))^m+m\int_M i_X\Theta_\ga(t)\wedge(\sqrt{-1}\Theta_\ga(t))^{m-1}\wedge\sqrt{-1}\pa\varphi_\ga\Big)\\=&0.
\end{align*}
\begin{proof}[Proof of Theorem \ref{localization}]
Now, we follow an argument in the book \cite{futaki88} (see Theorem 5.2.8), originally due to Bott \cite{bott67} to give the localization formula.

Consider an invariant polynomial $P$ of degree $(m+l)$ for $l=0, 1$, let
\[P_\ga(E_\ga+\sqrt{-1}\Theta_\ga)=\sum\limits_{r=0}^{m+l}P_{\ga, r}(E_\ga, \sqrt{-1}\Theta_\ga),\]
where 
\[P_{\ga, r}(E_\ga, \sqrt{-1}\Theta_\ga)=\binom {m+l}{r}P(E_\ga, \dots, E_\ga;\underbrace{\sqrt{-1}\Theta_\ga, \dots, \sqrt{-1}\Theta_\ga}_r ).\]
Since $\bar\pa E_\ga=i_X\Theta_\ga$, we have 
\[\sqrt{-1}\,\bar\pa P_\ga=i_X P_\ga.\]

Define a $(1, 0)$ form $\pi_\ga$ as follows: for a holomorphic vector field $Y$, 
\[i_Y\pi_\ga=\frac{\omega_\ga(Y, X)}{\omega_\ga(X, X)},\]
then 
\[i_X\pi_\ga=1,\quad \text{ and }\quad i_X\bar\pa\pi_\ga=0.\]
We further define 
\[\eta_\ga=\frac{\pi_\ga}{1-\sqrt{-1}\bar\pa\pi_\ga}\wedge P_\ga(E_\ga+\sqrt{-1}\Theta_\ga),\]
then $\eta_\ga$ is defined outside zero sets of $X$. The computation shows 
\[P_\ga(E_\ga+\sqrt{-1}\Theta_\ga)=-\sqrt{-1}\,\bar\pa\eta_\ga+i_X\eta_\ga.\]
Let $B_\epsilon(Z)$ be an $\epsilon$-neighbourhood of $Z$. Then 
\begin{align*}
&\int_M P_\ga(E_\ga+\sqrt{-1}\Theta_\ga)\\
=&\lim\limits_{\epsilon\to 0}\int_{M-B_{\epsilon(Z)}} P_\ga(E_\ga+\sqrt{-1}\Theta_\ga)\\
=&\sqrt{-1}\lim\limits_{\epsilon\to 0}\int_{M-B_{\epsilon(Z)}}-\bar\pa\eta_\ga^{(2m-1)}=\sqrt{-1}\sum\limits_{\lambda\in \Lambda}\lim\limits_{\epsilon\to 0}\int_{\pa B_{{\epsilon}(Z)}} \eta_\ga^{(2m-1)}\\
=&\sqrt{-1}\sum\limits_{\lambda\in \Lambda}\lim\limits_{\epsilon\to 0}\int_{\pa B_{{\epsilon}(Z)}} \pi_\ga\wedge\big(1+(\sqrt{-1}\bar\pa\pi_\ga)+(\sqrt{-1}\bar\pa\pi_\ga)^2+\dots+(\sqrt{-1}\bar\pa\pi_\ga)^{m-1}\big)\\
&\qquad\qquad\qquad\qquad\qquad\qquad \wedge \sum\limits_{r=0}^{m-1} P_{\ga, r}(E_\ga, \sqrt{-1}\Theta_\ga).
\end{align*}
As computed in Theorem 5.2.8 in \cite{futaki88} or \cite{bott67} , 
\[(2\pi)^{-m}\int_M P_\ga(E_\ga+\sqrt{-1}\Theta_\ga)=\sum\limits_{\lambda\in\Lambda}\int_{Z_\lambda}\frac{P_\ga(E_\ga+\sqrt{-1}\Theta_\ga)|_{Z_\lambda}}{\det\big((2\pi)^{-1}(L^{N_\ga}(X)+\sqrt{-1}K_\ga)\big)},\]
where $K_\ga$ is the curvature of the normal bundle $N_\ga$ with respect to the induced metric. 
Taking $P=tr^{m+1}$ and $P=tr^m$, and apply above to (\ref{futall}), we obtain the localization formula of $\Fut(X)$ in the coupled case (\ref{locfut}).
\end{proof}

\section{Application of Localization Formula}

Before computing the example, we remark that by Theorem 3.2 in \cite{futakizhang}, \eqref{Th3.3} is equivalent to
$$ \int_M (u_1 + \cdots + u_k) dV = 0$$
where $dV = e^{f_\ga} \omega_\ga^m$ which is independent of $\ga$ by the normalization \eqref{normalization}. 
By Theorem 5.2 in \cite{futakizhang} this condition is equivalent to 
\begin{equation}\label{Minkowski}
\sum_{\ga=1}^k \mathcal P_\ga = \mathcal P_{-K_M}
\end{equation}
where $\mathcal P_\ga$ is the moment map image of $\omega_\ga$.

We consider the tautological line bundles $\mathcal{O}_{\mathbb{CP}^1}(-1)\to \mathbb{CP}^1$ and $\mathcal{O}_{\mathbb{CP}^2}(-1)\to \mathbb{CP}^2$, and the bundle $E=\mathcal{O}_{\mathbb{CP}^1}(-1)\oplus\mathcal{O}_{\mathbb{CP}^2}(-1)$ over $ \mathbb{CP}^1\times\mathbb{CP}^2$. Let $M$ be the total space of the projective line bundle $\mathbb{P}(E)$ over $\mathbb{CP}^1\times\mathbb{CP}^2$. In local coordinates, we let
\begin{align*}
\mathbb{CP}^1=\{(b_0: b_1)\},\qquad \mathbb{CP}^2=\{(a_0:a_1:a_2)\},
\end{align*}
\begin{align*}
\mathcal{O}_{\mathbb{CP}^1}(-1)&=\{[(w_0, w_1),(b_0:b_1)]|\ (w_0, w_1)=\lambda(b_0, b_1)\text{\ for\ some\ }\lambda\in\mathbb C\},\\
\mathcal{O}_{\mathbb{CP}^2}(-1)&=\{[(z_0, z_1, z_2), (a_0:a_1:a_2)]|\ (z_0, z_1, z_2)=\mu(a_0, a_1, a_2))\text{\ for\ some\ }\mu\in \mathbb C\},
\end{align*}
\begin{align*}
M=&\{[(z_0: z_1:z_2: w_0:w_1), (a_0: a_1:a_2), (b_0: b_1)]|\\
&\ (w_0, w_1)=\lambda(b_0, b_1), (z_0, z_1, z_2)=\mu(a_0, a_1, a_2))\text{\ for\ some\ } (\lambda,\mu) \ne (0,0)\text{\ in\ }\mathbb C \times \mathbb C\}.
\end{align*}
The $(\CC^*)^4$-action on $M$ is defined by extending the $\CC^*$-action on $\mathbb{CP}^1$ and $(\CC^*)^2$-action on $\mathbb{CP}^2$. We let $(t_1, t_2, t_3, t_4)\in(\CC^*)^4$, then 
\begin{align*}
&(t_1, t_2, t_3, t_4)\cdot[(z_0: z_1:z_2: w_0:w_1), (a_0: a_1:a_2), (b_0: b_1)]\\
=&[(z_0: t_1z_1:t_2z_2: t_4w_0:t_4t_3w_1), (a_0: t_1a_1:t_2a_2), (b_0:t_3b_1)].
\end{align*}
There are totally seven $(\CC^*)^4$-invariant divisors;
\begin{align*}
D_1=\{z_0=a_0=0\}, \quad D_2=\{z_1=a_1=0\}, \quad D_3=\{z_2=a_2=0\},
\end{align*}
which are identified with $\mathbb{CP}^1$-bundle over $\mathbb{CP}^1\times\mathbb{CP}^1$;
\begin{align*}
D_4=\{b_0=w_0=0\},\quad D_5=\{b_1=w_1=0\}, 
\end{align*}
which are identified with $\mathbb{CP}^1$-bundle over $\mathbb{CP}^2$;
\begin{align*}
D_6=\{z_0=z_1=z_2=0\}, \quad D_7=\{w_0=w_1=0\},
\end{align*}
which are identified with $\mathbb{CP}^1\times\mathbb{CP}^2$.
It is known that
\[K_M^{-1}=\sum\limits_{i=1}^7 D_i.\]
As in \cite{Hultgren17}, we consider the following decomposition for $c\in(1/4, 3/4)$ which is ampleness condition
for the line bundles associated with $D(c)$ and $D(1-c)$ below.  Define
\begin{align*}
D(c)&=\frac{1}{2}K_M^{-1}+(c-\frac{1}{2})(D_4+D_5)\\
 D(1-c)&=\frac{1}{2}K_M^{-1}+(\frac{1}{2}-c)(D_4+D_5),
\end{align*}
then
\begin{equation}\label{decomp}
K_M^{-1}=D(c)+D(1-c).
\end{equation}
We remark that the torus action preserves above decomposition. 

Note also that the invariant $\Fut$ is invariant under any automorphism of $M$ preserving the decomposition \eqref{decomp}. 
Using the automorphism $(b_0,b_1) \mapsto (b_1,b_0)$ one can see $\Fut(X_3) = \Fut(-X_3)$ and thus
$\Fut(X_3) = 0$ for the infinitesimal generator $X_3$ for the $t_3$-action, and similarly $\Fut(X_1) = \Fut(X_2) = 0$ for the infinitesimal generators $X_1$ and $X_2$ of $t_1$ and $t_2$-actions using the automorphisms induced by the odd permutations of the coordinates $(a_0:a_1:a_2)$. Hence, to compute the coupled $\Fut$ invariant, it is sufficient to consider the action of one parameter subgroup $(1, 1, 1, t_4)$ on $M$ by 
\begin{align*}
&(1, 1, 1, t_4)\cdot[(z_0: z_1:z_2: w_0:w_1), (a_0: a_1:a_2), (b_0: b_1)]\\
=&[(z_0: z_1:z_2: t_4w_0:t_4w_1), (a_0: a_1:a_2), (b_0:b_1)].
\end{align*}
For this action, let $\xi=\lambda/\mu$, $\eta=1/\xi$, then the associated holomorphic vector field is
\[X=\xi\frac{\pa }{\pa \xi}=-\eta\frac{\pa }{\pa \eta}.\]
Zero sets are
\begin{align*}
Z_\infty=\{\mu=0\}=D_6, \quad\text{ and }\quad Z_0=\{\lambda=0\}=D_7.
\end{align*}
Since
\begin{align*}
\mathbb{P}(\mathcal{O}_{\mathbb{CP}^1}(-1)\oplus\mathcal{O}_{\mathbb{CP}^2}(-1))&=\mathbb{P}\Big((\mathcal{O}_{\mathbb{CP}^1}(-1)\oplus\mathcal{O}_{\mathbb{CP}^2}(-1))\otimes\mathcal{O}_{\mathbb{CP}^2}(1)\Big)\\
&=\mathbb{P}\Big((\mathcal{O}_{\mathbb{CP}^1}(-1)\otimes\mathcal{O}_{\mathbb{CP}^2}(1))\oplus\mathcal{O}_{\mathbb{CP}^2}\Big),
\end{align*}
the normal bundle of $Z_\infty$ is
\[\nu(Z_\infty)=\mathcal{O}_{\mathbb{CP}^1}(-1)\otimes\mathcal{O}_{\mathbb{CP}^2}(1),\]
similarly, the normal bundle of $Z_0$ is
\[\nu(Z_0)=\mathcal{O}_{\mathbb{CP}^1}(1)\otimes\mathcal{O}_{\mathbb{CP}^2}(-1)=\nu(Z_\infty)^{-1}.\]

Let $a, b$ be the positive generators of $H^2(\mathbb{CP}^1, \ZZ)$ and $H^2(\mathbb{CP}^2, \ZZ)$. Then 
\[c_1(\mathbb{CP}^1)=2a, \quad c_1(\mathbb{CP}^2)=3b,\]
and
\[c_1(K_M^{-1})|_{Z_\infty}=c_1(Z_\infty)+c_1(\nu(Z_\infty))=2a+3b-a+b=a+4b.\]
Similarly we have
\[c_1(K_M^{-1})|_{Z_0}=3a+2b.\]
Since the line bundle $[D_4]$ restricted to $Z_\infty = D_6$ is isomorphic to the line bundle corresponding to
the divisor $\{b_0 = 0\}$ in $\mathbb{CP}^1\times\mathbb{CP}^2$ we have $c_1([D_4])|_{Z_\infty} = a$.
Similarly we have 
\[c_1([D_4])_{Z_0}=c_1([D_5])|_{Z_\infty}=c_1([D_5])_{Z_0}=a.\]
Then
\begin{align*}
c_1(D(c))|_{Z_\infty}=\frac{1}{2}(a+4b)+(c-\frac{1}{2})2a=(2c-\frac{1}{2})a+2b,\\
c_1(D(c))|_{Z_0}=\frac{1}{2}(3a+2b)+(c-\frac{1}{2})2a=(2c+\frac{1}{2})a+b.
\end{align*}
To see the value of $u$ along the zero sets of $X$ we may use the description of the moment polytope $P(c)$ in 
\cite{Hultgren17}
$$ P(c) = \{ y \in \mathbb R^4 : \langle y, d_i \rangle \le \frac12, i \ne 4,5,\ \langle y, d_i\rangle \le c, i = 4,5\}$$
where $d_i$ are as described in \cite{Hultgren17}. 
Since $P(c) + P(1-c) = P_{-K_M}$ the moment polytopes are those obtained by the Hamiltonian functions satisfying \eqref{Th3.3} 
as follows from the arguments of the beginning of this section.
From this description for $d_6 = (0,0,0,-1)$ and 
$d_7=(0,0,0,1)$ we see
\[u\big|_{Z_\infty}=-\frac{1}{2}, \qquad u\big|_{Z_0}=\frac{1}{2}.\] 
By using the fact 
\[a^2=b^3=0,\]
we first compute
\begin{equation}\label{volc}
\begin{split}
\Vol(D(c))&=\Big[\frac{(u|_{Z_\infty}+c_1(D(c))|_{Z_\infty})^4}{u|_{Z_\infty}+c_1(\nu(Z_\infty))}+\frac{(u|_{Z_0}+c_1(D(c))|_{Z_0})^4}{u|_{Z_0}+c_1(\nu(Z_0))}\Big][\mathbb{CP}^1\times\mathbb{CP}^2]\\
&=\Big[\frac{\big(-1/2+(2c-1/2)a+2b\Big)^4}{-1/2-a+b}+\frac{\Big(1/2+(2c+1/2)a+b\big)^4}{1/2+a-b}\Big][\mathbb{CP}^1\times\mathbb{CP}^2]\\
&=112c-6,
\end{split}
\end{equation}
replacing $c$ by $1-c$, we get
\begin{equation}\label{vol1c}
\Vol(D(1-c))=106-112c.
\end{equation}

We also need to compute the numerators in the localization formula. For the divisor $D(c)$, 
\begin{equation}\label{numc}
\begin{split}
&\Big[\frac{(u|_{Z_\infty}+c_1(D(c))|_{Z_\infty})^5}{u|_{Z_\infty}+c_1(\nu(Z_\infty))}+\frac{(u|_{Z_0}+c_1(D(c))|_{Z_0})^5}{u|_{Z_0}+c_1(\nu(Z_0))}\Big][\mathbb{CP}^1\times\mathbb{CP}^2]\\
=&\Big[\frac{\big(-1/2+(2c-1/2)a+2b\Big)^5}{-1/2-a+b}+\frac{\Big(1/2+(2c+1/2)a+b\big)^5}{1/2+a-b}\Big][\mathbb{CP}^1\times\mathbb{CP}^2]\\
=&-30c+12,
\end{split}
\end{equation}
replacing $c$ by $1-c$, we get for divisor $D(1-c)$,
\begin{equation}\label{num1c}
\begin{split}
&\Big[\frac{(u|_{Z_\infty}+c_1(D(1-c))|_{Z_\infty})^5}{u|_{Z_\infty}+c_1(\nu(Z_\infty))}+\frac{(u|_{Z_0}+c_1(D(1-c))|_{Z_0})^5}{u|_{Z_0}+c_1(\nu(Z_0))}\Big][\mathbb{CP}^1\times\mathbb{CP}^2]\\=&30c-18.
\end{split}
\end{equation}

Plug  above (\ref{volc}), (\ref{vol1c}), {\ref{numc}), (\ref{num1c}) into the localization formula  (Theorem \ref{localization}),  we obtain
\begin{equation}
\begin{split}
\Fut(X)=&\frac{\Big[\frac{(u|_{Z_\infty}+c_1(D(c))|_{Z_\infty})^5}{u|_{Z_\infty}+c_1(\nu(Z_\infty))}+\frac{(u|_{Z_0}+c_1(D(c))|_{Z_0})^5}{u|_{Z_0}+c_1(\nu(Z_0))}\Big][\mathbb{CP}^1\times\mathbb{CP}^2]}{\Vol(D(c))}\\&+\frac{\Big[\frac{(u|_{Z_\infty}+c_1(D(1-c))|_{Z_\infty})^5}{u|_{Z_\infty}+c_1(\nu(Z_\infty))}+\frac{(u|_{Z_0}+c_1(D(1-c))|_{Z_0})^5}{u|_{Z_0}+c_1(\nu(Z_0))}\Big][\mathbb{CP}^1\times\mathbb{CP}^2]}{\Vol(D(1-c))}\\
=&\frac{-30c+12}{112c-6}+\frac{30c-18}{106-112c}\\
=&\frac{-15(112c^2-112c+23)}{(56c-3)(56c-53)},
\end{split}
\end{equation}
therefore, the invariant $\Fut$ character vanishes when 
\[c=\frac{1}{2}\pm\frac{1}{4}\sqrt{\frac{5}{7}}.\]
This is the same as in \cite{Hultgren17}.


\begin{thebibliography}{99}

\bibitem{aubin76}T.~Aubin : Equations du type de Monge-Amp\`ere sur les 
vari\'et\'es k\"ahl\'eriennes compactes, C. R. Acad. Sci. Paris, {\bf 283}, 
119--121 (1976).

\bibitem{bott67} R. Bott: A residue formula for holomorphic vector-fields. J. Diff. Geom. 1(1967), 311-330.

\bibitem{CDS3}X.~X.~Chen, S.~K.~Donaldson, S.~Sun : 
K\"ahler-Einstein metric on Fano manifolds. III: limits with cone angle approaches $2\pi$ and completion of the main proof, 
J. Amer. Math. Soc. 28, 235--278 (2015). 

\bibitem{DatarPingali1901}
V.V.~Datar and V.P.~Pingali : On coupled constant scalar curvature K\" ahler metrics. arXiv preprint arXiv:1901.10454 (2019).

\bibitem{DelcroixHultgren1812}T.~Delcroix and J.~Hultgren : Coupled complex Monge-Amp\`ere equations on Fano horosymmetric manifolds. arXiv preprint arXiv:1812.07218.

\bibitem{futaki83.1}A.~Futaki : 
An obstruction to the existence of Einstein K\"ahler metrics, Invent. 
Math. {\bf 73}, 437-443 (1983).

\bibitem{futaki86}A.~Futaki : 
On a character of the automorphism group of a compact complex manifold, Invent. Math., 87(1987), 655-660.

\bibitem{futaki87}A.~Futaki: The Ricci curvature of symplectic quotients of Fano manifolds. Tohoku Math. J. (2) 39 (1987), no. 3, 329--339. 

\bibitem{futaki88} A. Futaki, \ke Metrics and Integral Invariants.  Lecture Notes in Mathematics, 1314. Springer-Verlag, Berlin, 1988. iv+140 pp. 

\bibitem{FM02}A.~Futaki and T.~Mabuchi : 
Moment maps and multilinear bilinear forms associated with symplectic classes, Asian J. Math., {bf 6} (2002), 349-371.

\bibitem{futakiMabuchiSakane90}A. Futaki, T. Mabuchi and Y. Sakane: Einstein-\ka metrics with positive Ricci curvature.  \ka metric and moduli spaces, 11-83, Adv. Stud. Pure Math., 18-II, Academic Press, Boston, MA, 1990.

\bibitem{futakimorita85}A.~Futaki and S.~Morita : Invariant polynomials of the automorphism 
group of a compact complex manifold, J. Diff. Geom.,
{\bf 21}, 135--142 (1985).


\bibitem{futakizhang}A. Futaki. and Y. Zhang. Sci. China Math. (2019). https://doi.org/10.1007/s11425-018-9499-y


 \bibitem{Guillemin}V.~Guillemin : Moment maps and combinatorial invariants of Hamiltonian $T^n$-spaces, Birkh\"auser Boston 1994.

\bibitem{Hultgren17}J.~Hultgren : Coupled K\" ahler-Ricci solitons on toric manifolds. arXiv preprint arXiv:1711.09881 (2017).

\bibitem{HultgrenWittNystrom18}J.~Hultgren and D.~Witt Nystr\"om : Coupled K\"ahler-Einstein metrics. Int. Math. Res. Not. published online in 2018. https://doi.org/10.1093/imrn/rnx298

\bibitem{matsushima57}Y.~Matsushima : Sur la structure du groupe 
d'hom\'eomorphismes analytiques d'une certaine vari\'et\'e kaehl\'erienne, Nagoya
Math. J., {\bf 11}, 145-150 (1957).

\bibitem{Nakamura19}S.~Nakamura : 
Deformation for coupled K\"ahler-Einstein metrics, in preparation.

\bibitem{pingali}
V.P. ~Pingali : Existence of coupled K\"ahler-Einstein metrics using the continuity method, 
Internat. J. Math. 29(2018), 1850041, 8 pp. 

\bibitem{Takahashi1901}
R.~Takahashi : Ricci iteration, twisted and coupled K\" ahler-Einstein metrics. arXiv preprint arXiv:1901.09754, (2019).

\bibitem{Takahashi1904}
R.~Takahashi : Geometric quantization of coupled K\" ahler-Einstein metrics. arXiv preprint arXiv:1904.12812 (2019).

\bibitem{Tian12}G.~Tian: K-stability and K\"ahler-Einstein metrics. Comm. Pure Appl. Math. {\bf 68}, no. 7: 1085--1156 (2015).


\bibitem{yau78}S.-T.Yau : On the Ricci curvature of a compact K\"ahler
manifold and the complex Monge-Amp\`ere equation I, Comm. Pure Appl.
Math. 31(1978), 339-441.

\end{thebibliography}
\end{document}